\newtheorem{theorem}{Theorem}[section]
\newtheorem{lemma}[theorem]{Lemma}
\newtheorem{corollary}[theorem]{Corollary}
\newtheorem{remark}[theorem]{Remark}
\begin{document}

\nocite{*}

\title{Rational points on non-normal hypercubics}

\author{Evgeny Mayanskiy}

\maketitle

\begin{abstract}
We show that the count of rational points by de la Bret\`{e}che, Browning and Salberger on the Cayley ruled cubic surface extends to all non-normal integral hypercubics which are not cones. 
\end{abstract}

\section{Introduction}

In \cite{Browning}, a precise asymptotic formula for the number of rational points on the Cayley ruled cubic surface was established. Moreover, the leading term was expressed in terms of Tamagawa constants. The purpose of this note is to show that the asymptotic formula of \cite{Browning} extends to all non-normal integral hypercubics which are not cones. The fibration method, which was one of the methods used in \cite{Browning}, goes through exactly as in \cite{Browning}.\\

In order to count rational points, we use the following height function:
$$
H\colon {\mathbb P}^n(\mathbb Q) \to {\mathbb R}_{>0},\quad (t_0: \ldots : t_n) \mapsto \sqrt{t_0^2 + \ldots + t_n^2},
$$
where $(t_0, \ldots , t_n) \in {\mathbb Z}^{n+1}_{\operatorname{prim}}$ and ${\mathbb Z}^{n+1}_{\operatorname{prim}}=\{  (t_0, \ldots , t_n) \in  {\mathbb Z}^{n+1}\setminus (0, \ldots , 0) \;\mid \; \operatorname{gcd}(t_0, \ldots , t_n)=1  \}$.\\

Given a geometrically integral projective variety $W\subset {\mathbb P}^n_{\mathbb Q}$, we let $W^{norm}\subset W$ denote the locus of geometrically normal points. The counting function for the rational points on $W^{norm}$, as a function of $B\in {\mathbb R}_{>0}$, is 
$$
N(W^{norm}, B)=\# \{ t\in W^{norm}(\mathbb Q) \; \mid \; H(t)\leq B \}.
$$

Our main result is the following. All asymptotic formulas are given with respect to $B\to +\infty$.

\begin{theorem}\label{main}
\begin{enumerate}
\item Let $a\in \mathbb Z \setminus \{ 0 \} $ be square-free and $W\subset {\mathbb P}^3_{\mathbb Q}$ be given by the equation $t_0t_1t_2 + t_3\cdot (t_0^2+a\cdot t_1^2)=0$. Then 
$$
N(W^{norm}, B)=\frac{\pi B^2}{4\zeta(2)}\cdot \left( 4+\sum_{\substack{(\mu,\lambda)\in {\mathbb Z}^2_{\operatorname{prim}} \\ \mu \neq 0}} \frac{\operatorname{gcd}(a,\mu)}{\sqrt{f(\mu,\lambda)}} \right) + O(B^{3/2}(\operatorname{log}B)^2),
$$
where $f(\mu,\lambda) = ({\lambda}^2 + {\mu}^2)({\lambda}^2{\mu}^2+({\mu}^2 + a\cdot {\lambda}^2)^2)$.
\item Let $W\subset {\mathbb P}^4_{\mathbb Q}$ be given by the equation $t_0^2t_2 + t_1^2t_3 + t_0t_1t_4=0$. Then 
$$
N(W^{norm}, B)=\frac{\pi B^3}{3\zeta(3)}\sum_{(\mu,\lambda)\in {\mathbb Z}^2_{\operatorname{prim}}} \frac{1}{\sqrt{f(\mu,\lambda)}} + O(B^{2}\operatorname{log}B),
$$
where $f(\mu,\lambda) = ({\lambda}^2 + {\mu}^2)({\lambda}^2{\mu}^2+{\mu}^4 + {\lambda}^4)$.
\end{enumerate}
\end{theorem}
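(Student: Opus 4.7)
The plan is to follow the fibration method of \cite{Browning}. In both parts, the defining equation of $W$ is linear in a subset of the coordinates, producing a natural fibration over $\mathbb{P}^1$. Rational points on $W^{norm}$ can be parameterized by pairs (base point, fiber point), reducing the count to a lattice-point estimate on each fiber summed over the base.

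\textbf{Geometric setup.} A direct Jacobian calculation shows that in both cases the non-normal locus is $L=\{t_0=t_1=0\}$---a line in $\mathbb{P}^3$ for part (1), a plane in $\mathbb{P}^4$ for part (2)---and that $L\subset W$. On the complement, the projection $(t_0:\ldots:t_n)\mapsto (t_0:t_1)$ defines a morphism $\pi:W\setminus L\to \mathbb{P}^1$, and since the equation is linear in $(t_2,t_3)$ (resp.\ $(t_2,t_3,t_4)$), each fiber of $\pi$ is a linear subspace that can be parameterized explicitly.

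\textbf{Parameterization.} For a primitive pair $(\mu,\lambda)\in \mathbb{Z}^2_{\operatorname{prim}}$ write $(t_0,t_1)=d(\mu,\lambda)$ with $d\in\mathbb{Z}_{>0}$. In part (1) the equation reduces to $\mu\lambda\,t_2+(\mu^2+a\lambda^2)\,t_3=0$, whose integer solutions are $(t_2,t_3)=k\bigl(-(\mu^2+a\lambda^2)/g,\mu\lambda/g\bigr)$ with $g=\gcd(\mu\lambda,\mu^2+a\lambda^2)$. The squarefreeness of $a$ and the primitivity of $(\mu,\lambda)$ give $g=\gcd(a,\mu)$ by a short prime-by-prime check. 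In part (2) the equation becomes $\mu^2 t_2+\lambda^2 t_3+\mu\lambda\,t_4=0$; its coefficients have $\gcd=1$, so its integer solutions form a rank-$2$ lattice in $\mathbb{Z}^3$ with convenient basis $(\lambda,0,-\mu),(0,\mu,-\lambda)$ of covolume $\sqrt{\mu^4+\mu^2\lambda^2+\lambda^4}$. In both cases, primitivity of the full $(n+1)$-tuple becomes a coprimality condition between $d$ and the fiber parameter(s), and the squared height $\sum_i t_i^2$ becomes a positive-definite quadratic form in these integer parameters of determinant $f(\mu,\lambda)$---a fact visible in the explicit factorization of $f$ as $(\mu^2+\lambda^2)$ times the squared lattice covolume.

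\textbf{Counting and error term.} A Gauss-type lattice-point count (in an ellipse for part (1), an ellipsoid for part (2)), combined with M\"obius inversion for the coprimality, yields the main term per primitive pair. Halving to account for the $\pm 1$ projective equivalence and summing over $(\mu,\lambda)\in\mathbb{Z}^2_{\operatorname{prim}}$ produces the stated asymptotic. In part (1) the constant $4$ captures the contribution from the four primitive pairs $\pm(1,0)$ and $\pm(0,1)$, which parameterize the degenerate fibers over $(1:0)$ and $(0:1)$, where the fiber equation forces $t_3=0$ and the count reduces to a Gauss-circle count in a standard disk. The main technical difficulty is making the per-fiber error uniform in $(\mu,\lambda)$ so that the aggregate tail does not swamp the main term; this requires careful dyadic decomposition of the range of $(\mu,\lambda)$, M\"obius bookkeeping, and the sharp lattice-point bounds developed for the Cayley ruled cubic in \cite{Browning}, which as the author remarks apply verbatim in the present setting.
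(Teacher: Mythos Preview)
Your overall strategy matches the paper's: fiber over $\mathbb{P}^1$ via $(t_0:t_1)$, parameterize each fiber, count lattice points in an ellipse (part~1) or ellipsoid (part~2), M\"obius-invert for primitivity, and sum. The parameterizations you give and the identification $g=\gcd(a,\mu)$ are correct and agree with the paper.

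However, your explanation of the constant $4$ in part~(1) is wrong and, if followed literally, would lead to a miscount. The fiber over $(1:0)$ is \emph{not} degenerate in any sense requiring separate treatment: for $(\mu,\lambda)=(\pm 1,0)$ one has $f(\pm 1,0)=1$ and $\gcd(a,\pm 1)=1$, so these two primitive pairs already sit inside the sum $\sum_{\mu\neq 0}\gcd(a,\mu)/\sqrt{f(\mu,\lambda)}$ and contribute $2$ there. Only the fiber over $(0:1)$ (where $\mu=0$) is excluded from that sum; its Gauss-circle count $\frac{\pi B^2}{2\zeta(2)}=\frac{\pi B^2}{4\zeta(2)}\cdot 2$ accounts for $2$ of the $4$.

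The remaining $2$ does not come from any particular fiber. In the paper's argument the $\mu\neq 0$ fibers are split into a range $\Sigma_1$ where $\mu^2+\lambda^2\le B^2$ but the second factor of $f/\gcd(a,\mu)^2$ is so large that only the single point with $\tau_3=0$ lies under the height bound, and a complementary range $\Sigma_2$. On $\Sigma_1$ each fiber contributes exactly $1$; summing this over $\Sigma_1$ gives, up to $O(B^{3/2})$, the number of primitive lattice points in the disk of radius $B$ with $\mu\neq 0$, namely $\frac{\pi B^2}{\zeta(2)}$. After the global factor $\tfrac12$ this supplies the other $\frac{\pi B^2}{4\zeta(2)}\cdot 2$. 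The ellipse asymptotic $N(V\cap V_y,B)\sim \frac{\pi B^2\gcd(a,\mu)}{2\zeta(2)\sqrt{f(\mu,\lambda)}}$ is only applied on $\Sigma_2$, and extending its main-term sum to all $\mu\neq 0$ costs $O(B^{3/2})$. If instead you apply the ellipse asymptotic uniformly and then add a separate ``degenerate-fiber'' $4$, you double-count the $(1:0)$ fiber and miss the $\Sigma_1$ contribution, so the bookkeeping must be done as in the paper.
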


Our notation and arguments follow closely \cite{Browning}.

\section{Classification of non-normal hypercubics over the rationals}

Non-normal cubic hypersurfaces over algebraically closed fields are classified in \cite{LPS}. The same argument also gives the following classification over $\mathbb Q$.

\begin{theorem}\label{classification}(cf. \cite{LPS}, Theorem~$3.1$)
Let $W\subset {\mathbb P}^n_{\mathbb Q}$ be a geometrically integral and geometrically non-normal hypersurface given by a homogeneous cubic polynomial $F\in \mathbb Q [ t_0,\ldots , t_n ]$. Then either $W$ is a cone or $F$ can be obtained by a linear coordinate change over $\mathbb Q$ from one of the following polynomials:
\begin{itemize}
\item $(t_0^2+a\cdot t_1^2)t_2+t_1^2(b\cdot t_0+c\cdot t_1)$, $a,b,c\in \mathbb Z$, $n=2$,
\item $t_0t_1t_2+t_0^3+a\cdot t_1^3$, $a\in\mathbb Z$, $n=2$,
\item $t_0^2t_2+t_1^2t_3$, $n=3$,
\item $t_0t_1t_2+t_3(t_0^2+a\cdot t_1^2)$, $a\in \mathbb Z\setminus \{ 0,1 \}$ is square-free, $n=3$,
\item $t_0t_1t_2+t_3t_0^2+t_1^3$, $n=3$,
\item $t_0^2t_2+t_0t_1t_3+t_1^2t_4$, $n=4$.
\end{itemize}
\end{theorem}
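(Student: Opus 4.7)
The plan is to run the proof of Theorem~$3.1$ of \cite{LPS} verbatim, carrying out every coordinate change over $\mathbb{Q}$ rather than over $\overline{\mathbb{Q}}$, and to isolate the unique place at which algebraic closure of the ground field is genuinely used. Throughout I assume $W$ is not a cone.

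First, since $W$ is a non-normal hypersurface, Serre's criterion $R_1$ forces $\mathrm{Sing}(W)$ to have a component $Z$ of codimension one in $W$, i.e.\ codimension two in $\mathbb{P}^n$. The main geometric step of \cite{LPS}—an intersection-theoretic calculation inside $\mathbb{P}^n$ making no use of algebraic closure—identifies $Z$ with a linear subspace $L \subset \mathbb{P}^n$ of codimension two. In each of the normal forms that eventually appears, a direct inspection of the partial derivatives of $F$ confirms that this $L$ is the unique codimension-two component of $\mathrm{Sing}(W)$, so it is Galois-stable and descends to a $\mathbb{Q}$-rational linear subspace. After a $\mathbb{Q}$-linear change of coordinates I take $L = \{t_0 = t_1 = 0\}$; then the vanishing of $F$ and of all $\partial F / \partial t_i$ along $L$ forces
$$
F = A t_0^2 + B t_0 t_1 + C t_1^2 + Q(t_0, t_1),
$$
with $A, B, C \in \mathbb{Q}[t_2, \ldots, t_n]_1$ and $Q \in \mathbb{Q}[t_0, t_1]_3$.

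I would then run the case analysis of \cite{LPS}, organised by the dimension $r$ of the $\mathbb{Q}$-span $V = \langle A, B, C \rangle \subset \mathbb{Q}[t_2, \ldots, t_n]_1$. The case $r = 1$, combined with the non-cone hypothesis, forces $n = 2$ and (after further normalising $Q$) yields items (i) and (ii); the case $r = 2$ puts us in $\mathbb{P}^3$, where according as the pencil of binary quadrics parametrised by $\mathbb{P}(V^{\vee})$ has two rational degenerate members, exactly one, or none, one obtains items (iii), (v), or (iv); and the case $r = 3$ puts us in $\mathbb{P}^4$, where using the freedom in choosing $A, B, C$ to absorb $Q$ into the leading term produces item (vi). All these normalisations use only operations in $GL_{n+1}(\mathbb{Q})$, so they transfer directly from the $\overline{\mathbb{Q}}$ argument of \cite{LPS} except at the one step where \cite{LPS} puts a nondegenerate binary quadratic form with scalar rational coefficients into the shape $t_0 t_1$.

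The main obstacle is this diagonalisation step, and it is the source of the parameter $a$ in item (iv). Over $\mathbb{Q}$ a nondegenerate binary quadratic form in $(t_0, t_1)$ with scalar coefficients can only be normalised to $t_0^2 + a t_1^2$ with $a \in \mathbb{Q}^{\times}$ well-defined modulo $(\mathbb{Q}^{\times})^2$, and after rescaling $t_1$ one may take $a \in \mathbb{Z}$ square-free. The exclusion $a = 0$ reflects the reducibility $t_0 t_1 t_2 + t_3 t_0^2 = t_0(t_1 t_2 + t_0 t_3)$, and the exclusion $a = 1$ is forced by the identity
$$
4\bigl[\, t_0 t_1 t_2 + t_3(t_0^2 + t_1^2)\, \bigr] = (t_0 + t_1)^2(t_2 + 2 t_3) + (t_0 - t_1)^2(-t_2 + 2 t_3),
$$
which exhibits $a = 1$ in case (iv) as $\mathbb{Q}$-linearly equivalent to case (iii). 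For square-free $a \in \mathbb{Z} \setminus \{0, 1\}$ one must still verify that no $\mathbb{Q}$-linear change of coordinates collapses $t_0 t_1 t_2 + t_3(t_0^2 + a t_1^2)$ into any other item of the list; a routine computation with the group of linear automorphisms preserving the singular line $\{t_0 = t_1 = 0\}$ reduces this to the statement that $a$ is not a square in $\mathbb{Q}$, which is satisfied by every square-free integer other than $1$. Combining these observations produces the six cases of the theorem.
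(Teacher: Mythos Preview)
Your proposal is correct and matches the paper's approach: the paper does not write out a proof of this theorem at all, but simply asserts that ``the same argument'' as in \cite{LPS}, Theorem~3.1 carries over to $\mathbb{Q}$, and your outline is exactly a fleshing-out of that remark, including the identification of the diagonalisation of the binary quadratic as the only step requiring modification and the $a=1$ identity (which the paper records immediately after the statement). One small tightening: your justification that the codimension-two linear component $L$ is $\mathbb{Q}$-rational is slightly circular as written (you appeal to the final normal forms); it is cleaner to observe directly that the non-normal locus of $W$ is intrinsically defined over $\mathbb{Q}$, and then invoke the result of \cite{LPS} that over $\overline{\mathbb{Q}}$ this locus is a single linear space, whence Galois-stable.
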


Note that $t_0t_1t_2+t_3(t_0^2+t_1^2)$ transforms to $4(t_0^2t_3+t_1^2t_2)$ after substitution $t_0\mapsto t_0+t_1$, $t_1\mapsto t_0-t_1$, $t_2\mapsto 2(t_3-t_2)$, $t_3\mapsto t_2+t_3$. Rational points on the cubic surfaces given by equations $t_0^2t_2+t_1^2t_3=0$ and $t_0t_1t_2+t_3t_0^2+t_1^3=0$ were counted in \cite{Browning}. 

\section{Geometry of the non-normal cubic threefold}

In this section we consider the hypercubic $\overline{W}\subset {\mathbb P}_{\overline{\mathbb Q}}^4$ given by the equation $t_0^2t_2+t_0t_1t_3+t_1^2t_4=0$. The normalization $\nu\colon X\to\overline{W}$ is the projection of the Segre cubic threefold scroll $X={\mathbb P}_{\overline{\mathbb Q}}^1 \times {\mathbb P}_{\overline{\mathbb Q}}^2 \subset {\mathbb P}_{\overline{\mathbb Q}}^5$ from a point $P\in {\mathbb P}_{\overline{\mathbb Q}}^5\setminus X$. \cite{LPS}\\

\begin{theorem}
The automorphism group of $\overline{W}$ fits into the short exact sequence of groups
$$
0\rightarrow K \rightarrow \operatorname{Aut}(\overline{W}) \rightarrow \operatorname{PGL}(2)\rightarrow 0,
$$
where $K={\overline{\mathbb Q}}^{*}\oplus \overline{\mathbb Q}\oplus \overline{\mathbb Q}$ with the product $(a,b,c)\cdot (a',b',c')=(aa',a'b+b',a'c+c')$.
\end{theorem}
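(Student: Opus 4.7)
The plan is to realize $\operatorname{Aut}(\overline{W})$ inside $\operatorname{PGL}(2)\times\operatorname{PGL}(3)$ via the normalization, and then cut out the image by a direct analysis of how the cubic $F=t_0^2 t_2+t_0 t_1 t_3+t_1^2 t_4$ transforms.

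\textbf{Projectivity and block form.} Any $\phi\in\operatorname{Aut}(\overline{W})$ lifts uniquely via normalization to $\tilde{\phi}\in\operatorname{Aut}(X)$, and since the factors of $X=\mathbb{P}^1\times\mathbb{P}^2$ have different dimensions, $\operatorname{Aut}(X)=\operatorname{PGL}(2)\times\operatorname{PGL}(3)$. Then $\tilde{\phi}$ preserves $\mathcal{O}(1,1)$, hence acts linearly on the Segre $\mathbb{P}^5\supset X$; since $\phi$ descends from $\tilde{\phi}$ through $\nu$, the projection point $P$ is fixed by $\tilde{\phi}$, so $\phi$ itself is projective linear on $\mathbb{P}^4$. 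The Jacobian criterion applied to $F$ shows the non-normal locus of $\overline{W}$ is the plane $\Pi=\{t_0=t_1=0\}$, which $\phi$ must preserve, so a lift to $\operatorname{GL}_5$ has the block form
\[
\Phi=\begin{pmatrix} A & 0\\ D & E\end{pmatrix},\qquad A\in\operatorname{GL}_2,\ D\in M_{3\times 2},\ E\in\operatorname{GL}_3.
\]

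\textbf{Parametrization.} Impose $F(\Phi t)=\lambda F(t)$ and split each new coordinate $\tilde{t}_i$ ($i=2,3,4$) into its $(t_0,t_1)$-linear part $L_i$ and its $(t_2,t_3,t_4)$-linear part $M_i$, giving
\[
F(\Phi t)=\bigl(\tilde{t}_0^2 L_2+\tilde{t}_0\tilde{t}_1 L_3+\tilde{t}_1^2 L_4\bigr)+\bigl(\tilde{t}_0^2 M_2+\tilde{t}_0\tilde{t}_1 M_3+\tilde{t}_1^2 M_4\bigr).
\]
The first bracket is homogeneous of degree $3$ in $(t_0,t_1)$ alone and must vanish; writing everything in the basis $\tilde{t}_0,\tilde{t}_1$ of $\operatorname{span}(t_0,t_1)$, its four coefficients in $\{\tilde{t}_0^3,\tilde{t}_0^2\tilde{t}_1,\tilde{t}_0\tilde{t}_1^2,\tilde{t}_1^3\}$ give a rank-$4$ linear system in the six entries of $D$, leaving a $2$-parameter family $D=D(b_2,b_3)$. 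The second bracket has bidegree $(2,1)$ in $((t_0,t_1),(t_2,t_3,t_4))$ and must equal $\lambda F(t)$; expanding via $(\tilde{t}_0^2,\tilde{t}_0\tilde{t}_1,\tilde{t}_1^2)^T=(\operatorname{Sym}^2 A)(t_0^2,t_0 t_1,t_1^2)^T$ reduces it to the single matrix identity $E=\lambda(\operatorname{Sym}^2 A)^{-T}$.

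\textbf{Sequence.} The projection $[\Phi]\mapsto[A]$ is a group homomorphism $\operatorname{Aut}(\overline{W})\to\operatorname{PGL}(2)$; the assignment $A\mapsto[\Phi(A,0,(\operatorname{Sym}^2 A)^{-T})]$ is a section, giving surjectivity. The kernel is cut out by $[A]=[I]$; normalizing $A=I_2$ via the central scalar leaves the $3$-parameter family $\Phi_{\mu,b_2,b_3}$ with $E=\mu I_3$ and $D=D(b_2,b_3)$, and direct block multiplication yields
\[
\Phi_{\mu_1,b_2,b_3}\cdot\Phi_{\mu_2,b'_2,b'_3}=\Phi_{\mu_1\mu_2,\,b_2+\mu_1 b'_2,\,b_3+\mu_1 b'_3},
\]
exhibiting the kernel as $\overline{\mathbb{Q}}^{\,*}\ltimes\overline{\mathbb{Q}}^{\,2}$. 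The law listed for $K$ in the statement is the opposite convention, so the isomorphism is furnished by the inversion involution $g\mapsto g^{-1}$. The main technical obstacle is the rank-$4$ analysis of the first bracket producing the explicit shape of $D(b_2,b_3)$; the remainder is bookkeeping with block matrices, and the initial reduction to projective linear maps is essential for everything to take place inside $\operatorname{GL}_5$.
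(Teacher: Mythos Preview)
Your argument is correct, but the opening reduction to projective linearity is handled differently from the paper and deserves a closer look.

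\textbf{Where the approaches diverge.} The paper obtains projectivity in one line: by the Lefschetz theorem $\operatorname{Pic}(\overline{W})=\mathbb{Z}\,\mathcal{O}(1)$, so every automorphism is induced by an element of $\operatorname{PGL}(5)$. You instead lift to $\operatorname{Aut}(X)=\operatorname{PGL}(2)\times\operatorname{PGL}(3)$, extend linearly to $\mathbb{P}^5$ via the Segre embedding, and then descend. This is a legitimate alternative; in fact, since $\overline{W}$ is non-normal, invoking Lefschetz for $\operatorname{Pic}(\overline{W})$ is not entirely routine, and your route through the smooth normalization sidesteps that issue. The trade-off is that your assertion ``the projection point $P$ is fixed by $\tilde{\phi}$'' is not automatic and needs a sentence of justification: the extended linear map $\Psi\in\operatorname{PGL}(6)$ permutes the two-point fibers of $\nu$ over the non-normal plane, hence permutes the secant lines of $X$ through $P$; this is a $2$-parameter family of concurrent lines in the $3$-plane $\{t_0=t_1=0\}$, and a linear automorphism preserving such a family must fix the common point $P$.

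\textbf{Where the approaches agree.} Once projectivity and the block shape of $\Phi$ are in place, both arguments amount to solving $F(\Phi t)=\lambda F(t)$. The paper records the explicit formulas for $A(t_2),A(t_3),A(t_4)$ case by case; your packaging via $E=\lambda(\operatorname{Sym}^2 A)^{-T}$ together with the rank-$4$ linear system for $D$ is the same computation in coordinate-free dress and is perhaps cleaner. Your kernel law differs from the paper's by the order of composition (matrix action on points versus pullback on coordinates), and you correctly note that $g\mapsto g^{-1}$ reconciles the two. One small remark: the set-theoretic section $A\mapsto[\Phi(A,0,(\operatorname{Sym}^2 A)^{-T})]$ is enough for surjectivity, but it is not a group-theoretic splitting and is not well defined on $\operatorname{PGL}(2)$ without a choice of lift; this does not affect the exact-sequence statement.
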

\begin{proof}
By the Lefschetz theorem, $\operatorname{Pic}(\overline{W})=\mathbb Z \mathcal{O}(1)$. Hence any automorphism of $\overline{W}$ is induced by an automorphism of ${\mathbb P}_{\overline{\mathbb Q}}^4$, i.e. by $A\in \operatorname{PGL}(5)$ which preserves (up to a scalar multiple) $t_0^2t_2+t_0t_1t_3+t_1^2t_4$. Any such $A$ also preserves the non-normal locus $\{  t_0=t_1=0 \}$ and so induces an automorphism of ${\mathbb P}_{\overline{\mathbb Q}}(\overline{\mathbb Q} t_0\oplus \overline{\mathbb Q} t_1)\cong {\mathbb P}_{\overline{\mathbb Q}}^1$. The resulting map $\operatorname{Aut}(\overline{W}) \rightarrow \operatorname{PGL}(2)$ is surjective.\\

Explicitly, suppose $A(t_0)=at_0+t_1$, $A(t_1)=ct_0+dt_1$, $ad-c\neq 0$. Then
\begin{gather*}
A(t_2)=u_4\cdot (d^2\cdot t_2-cd\cdot t_3+c^2\cdot t_4)-(ca_{31}+cda_{41})\cdot t_0-(da_{31}+d^2a_{41})\cdot t_1,\\
A(t_3)=u_4\cdot (-2d\cdot t_2+(ad+c)\cdot t_3-2ac\cdot t_4)+(aa_{31}+(ad-c)a_{41})\cdot t_0+a_{31}\cdot t_1,\\
A(t_4)=u_4\cdot (t_2-a\cdot t_3+a^2\cdot t_4)+aa_{41}\cdot t_0+a_{41}\cdot t_1,
\end{gather*}
where $u_4\neq 0$ and $a_{31},a_{41}$ are arbitrary. If $A(t_0)=t_0$, $A(t_1)=ct_0+dt_1$, $d\neq 0$, then
\begin{gather*}
A(t_2)=w_4\cdot (d^2\cdot t_2-cd\cdot t_3+c^2\cdot t_4)-(ca_{30}+c^2a_{40})\cdot t_0-(da_{30}+cda_{40})\cdot t_1,\\
A(t_3)=w_4\cdot (d\cdot t_3-2c\cdot t_4)+a_{30}\cdot t_0-da_{40}\cdot t_1,\\
A(t_4)=w_4\cdot t_4+a_{40}\cdot t_0,
\end{gather*}
where $w_4\neq 0$ and $a_{30},a_{40}$ are arbitrary. 
\end{proof}

The following Corollary was inspired by the arguments in \cite{Browning}.\\

\begin{corollary}
$\overline{W}$ is not toric.
\end{corollary}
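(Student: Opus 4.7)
The plan is to show that $\operatorname{Aut}(\overline{W})$ contains no algebraic subtorus of dimension $3$. Since $\overline{W}$ is three-dimensional, any toric structure on it would give a faithful action of $\mathbb{G}_m^3$ on $\overline{W}$ (via the open dense torus orbit), hence a $3$-dimensional subtorus of $\operatorname{Aut}(\overline{W})$. Ruling this out forces $\overline{W}$ not to be toric.

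First, I will determine the maximal subtori of $K=\overline{\mathbb Q}^*\oplus \overline{\mathbb Q}\oplus \overline{\mathbb Q}$ with its given non-abelian multiplication. The subset $U=\{(1,b,c):b,c\in\overline{\mathbb Q}\}$ is a normal subgroup on which the product specializes to coordinatewise addition, so $U\cong \mathbb{G}_a^2$ is unipotent, and the quotient $K/U\cong \mathbb{G}_m$ via $(a,b,c)\mapsto a$. Since a torus contains no nontrivial unipotent elements, any subtorus of $K$ meets $U$ trivially and therefore injects into $\mathbb{G}_m$; in particular its dimension is at most $1$.

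Next, given any subtorus $T\subset \operatorname{Aut}(\overline{W})$, the preceding theorem supplies a short exact sequence $0\to T\cap K\to T\to T'\to 0$, where $T'$ denotes the image of $T$ in $\operatorname{PGL}(2)$. Both $T\cap K$ and $T'$ are tori, and the maximal torus of $\operatorname{PGL}(2)$ is one-dimensional, so $\dim T\leq 1+1=2<3$, yielding the desired contradiction.

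The argument is essentially a dimension count once the structure theorem for $\operatorname{Aut}(\overline{W})$ is available, so no substantial obstacle is anticipated. The only point requiring a moment of care is recognizing $U$ as the unipotent radical of $K$ and invoking the standard fact that a torus meets a unipotent subgroup of a linear algebraic group only in the identity.
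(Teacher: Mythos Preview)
Your argument is correct and is exactly the approach taken in the paper, which simply asserts in one line that the maximal torus in $\operatorname{Aut}(\overline{W})$ has dimension $2<\dim(\overline{W})$; you have supplied the details behind this assertion. One tiny imprecision: $T\cap K$ need not be connected, so strictly speaking only its identity component is a torus, but this does not affect the dimension count.
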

\begin{proof}(cf. \cite{Browning})
The maximal torus in $\operatorname{Aut}(\overline{W})$ has dimension $2 < \operatorname{dim}(\overline{W})$.\end{proof}

\section{Rational points on $t_0t_1t_2 + t_3\cdot (t_0^2+a\cdot t_1^2)=0$}

In this section we prove Theorem~\ref{main}, part~$1$. The argument and notation follow \cite{Browning} closely. Let $W\subset {\mathbb P}^3_{\mathbb Q}$ be given by the equation $t_0t_1t_2 + t_3\cdot (t_0^2+a\cdot t_1^2)=0$, where $a\in \mathbb Z \setminus \{ 0 \} $ is square-free. Let $V=W\setminus \{ t_0=t_1=0 \}$ and
$$
V_y = \{ (t_0:t_1:t_2:t_3)\in {\mathbb P}^3 \; \mid \; \lambda t_0 - \mu t_1 = \mu\lambda t_2+({\mu}^2 +a{\lambda}^2)t_3=0  \} \subset W,
$$
where $y=(\mu : \lambda ) \in {\mathbb P}^1$.\\

Then $V=\coprod\limits_{y\in {\mathbb P}^1}V\cap V_y$, and so
$$
N(V,B)=\sum_{y\in {\mathbb P}^1(\mathbb Q)}N(V\cap V_y,B).
$$

\textit{Claim}. (cf. \cite{Browning}, Lemma~$3.1$) If $\mu \neq 0$, $(\mu, \lambda)\in {\mathbb Z}^2_{\operatorname{prim}}$, $y=(\mu :\lambda)\in {\mathbb P}^1(\mathbb Q)$, then
$$
N(V\cap V_y,B)=\frac{1}{2}\cdot \# \{\; (\tau_0 , \tau_3)\in {\mathbb Z}^2_{\operatorname{prim}} \;\mid \; \tau_0\neq 0 , \; H(\mu \tau_0,\lambda \tau_0, ({\mu_1}^2d+a_1{\lambda}^2)\cdot \tau_3, (\mu_1 \lambda )\cdot \tau_3)\leq B \},
$$
where $d=\operatorname{gcd}(a,\mu)\geq 1$, $\mu=\mu_1d$, $a=a_1d$.
$$
N(V\cap V_{(0:1)},B)=\frac{1}{2}\cdot \# \{\; (\tau_0 , \tau_3)\in {\mathbb Z}^2_{\operatorname{prim}} \;\mid \; \tau_0\neq 0 , \;\sqrt{{\tau_0}^2+{\tau_3}^2}\leq B  \}.
$$
\begin{proof} \cite{Browning}
Let $\mu\lambda \neq 0$ and take $(t_0,t_1,t_2,t_3)\in {\mathbb Z}^4_{\operatorname{prim}}\cap V_y$. Then $t_0=\mu\tau_0$, $t_1=\lambda\tau_0$, $t_3=(\mu_1\lambda )\cdot\tau_3$, $t_2=-({\mu_1}^2d+a_1{\lambda}^2)\cdot \tau_3$. \end{proof}

Note that $N(V\cap V_y,B)=0$ unless ${\lambda}^2+{\mu}^2\leq B^2$. Moreover, if $\mu\neq 0$ and 
$$
{\lambda}^2+{\mu}^2\leq B^2 < {\lambda}^2+{\mu}^2 + {\mu_1}^2{\lambda }^2 + ({\mu_1}^2d+a_1{\lambda}^2)^2,
$$
then $N(V\cap V_y,B)=1$. In particular, if $N(V\cap V_y,B)> 1$, then $\lvert \mu\lambda \rvert \ll B$ and so $\operatorname{min}\{ \lvert \mu \rvert , \lvert \lambda \rvert \} \ll \sqrt{B}$. Also, for $(\tau_0,\tau_3)\in {\mathbb Z}^2_{\operatorname{prim}}$ contributing to $N(V\cap V_y,B)$,
$$
\lvert\tau_0\rvert \ll \frac{B}{\sqrt{{\lambda}^2+{\mu}^2}},\quad \lvert\tau_3\rvert \ll \frac{B}{\operatorname{max} \{  \lvert \lambda\rvert , \lvert\mu\rvert \} }.
$$

Suppose $\mu\neq 0$. If 
$$
N^{*}(V_y,B)=\# \{\; (\tau_0 , \tau_3)\in {\mathbb Z}^2 \;\mid \; \tau_0\neq 0 ,\; H(\mu \tau_0,\lambda \tau_0, ({\mu_1}^2d+a_1{\lambda}^2)\cdot \tau_3, (\mu_1 \lambda )\cdot \tau_3)\leq B \},
$$
then
$$
N(V\cap V_y,B)=\frac{1}{2}\sum_{k\ll \frac{B}{\sqrt{{\lambda}^2+{\mu}^2}}}\mu(k)\cdot N^{*}\left( V_y,\frac{B}{k}\right) .
$$

By Euler's summation formula,
\begin{multline*}
N^{*}(V_y,B) = \sum_{\lvert \tau_3\rvert \leq \frac{B}{\sqrt{(\mu_1\lambda )^2+({\mu_1}^2d+a_1{\lambda}^2)^2}}} 2\cdot \left[ \frac{\sqrt{B^2 - ((\mu_1\lambda )^2+({\mu_1}^2d+a_1{\lambda}^2)^2)\cdot {\tau_3}^2 }}{\sqrt{{\lambda}^2+{\mu}^2}} \right] \\
=2 \cdot \int\limits_{-\frac{B}{\sqrt{(\mu_1\lambda )^2+({\mu_1}^2d+a_1{\lambda}^2)^2}}}^{\frac{B}{\sqrt{(\mu_1\lambda )^2+({\mu_1}^2d+a_1{\lambda}^2)^2}}}\frac{1}{\sqrt{{\lambda}^2+{\mu}^2}}\cdot \sqrt{B^2 - ((\mu_1\lambda )^2+({\mu_1}^2d+a_1{\lambda}^2)^2)\cdot x^2 }\cdot \operatorname{d}x + O\left( \frac{B}{\operatorname{max} \{ \lvert \mu \rvert , \lvert \lambda \rvert \} }\right) \\
=\frac{\pi B^2}{\sqrt{{\lambda}^2+{\mu}^2}\cdot \sqrt{(\mu_1\lambda )^2+({\mu_1}^2d+a_1{\lambda}^2)^2} } + O\left( \frac{B}{\operatorname{max} \{  \lvert \lambda\rvert , \lvert\mu\rvert \} }\right) .
\end{multline*}

Hence 
$$
N(V\cap V_y,B)=\frac{\pi B^2}{2\zeta(2)}\cdot \frac{1}{\sqrt{({\lambda}^2+{\mu}^2)((\mu_1\lambda )^2+({\mu_1}^2d+a_1{\lambda}^2)^2)} } + O\left( \frac{B\cdot \operatorname{log}B}{\operatorname{max} \{  \lvert \lambda\rvert , \lvert\mu\rvert \} }\right) .
$$

A similar calculation gives
$$
N(V\cap V_{(0:1)},B)=\frac{\pi B^2}{2\zeta(2)} + O(B\cdot \operatorname{log}B).
$$

In the expression 
\begin{multline*}
N(V,B)=N(V\cap V_{(0:1)},B) \\
+ \frac{1}{2}\sum_{d\mid a}\left( \sum_{\substack{(\mu ,\lambda )\in {\mathbb Z}^2_{\operatorname{prim}} \\ \mu \neq 0,\; d=\operatorname{gcd}(a,\mu) \\ {\mu}^2+{\lambda}^2 \leq B^2 < {\mu}^2+{\lambda}^2 + (\mu_1\lambda )^2 +({\mu_1}^2d+a_1{\lambda}^2)^2 }}1 + \sum_{\substack{(\mu ,\lambda )\in {\mathbb Z}^2_{\operatorname{prim}} \\ \mu \neq 0,\; d=\operatorname{gcd}(a,\mu) \\ {\mu}^2+{\lambda}^2 + (\mu_1\lambda )^2 +({\mu_1}^2d+a_1{\lambda}^2)^2 \leq B^2 }} N(V\cap V_y,B) \right) ,
\end{multline*}
denote the first and the second sums over $(\mu , \lambda)$ by $\Sigma_1$ and $\Sigma_2$ respectively. Then 
$$
\Sigma_1 = \sum_{\substack{(\mu ,\lambda )\in {\mathbb Z}^2_{\operatorname{prim}} \\ \mu \neq 0,\; d=\operatorname{gcd}(a,\mu) \\ {\mu}^2+{\lambda}^2 \leq B^2 }}1 + O(B^{3/2}),
$$
and
$$
\Sigma_2 = \frac{\pi B^2}{2\zeta(2)}\sum_{\substack{(\mu ,\lambda )\in {\mathbb Z}^2_{\operatorname{prim}} \\ \mu \neq 0,\; d=\operatorname{gcd}(a,\mu) }} \frac{1}{\sqrt{({\mu}^2+{\lambda}^2)((\mu_1\lambda )^2 +({\mu_1}^2d+a_1{\lambda}^2)^2)}} + O(B^{3/2}\cdot (\operatorname{log}B)^2).
$$

The same calculation as above gives
$$
\sum_{\substack{(\mu ,\lambda )\in {\mathbb Z}^2_{\operatorname{prim}} \\ \mu \neq 0 \\ {\mu}^2+{\lambda}^2 \leq B^2 }}1 = \frac{\pi B^2}{\zeta(2)} +  O(B\cdot \operatorname{log} B).
$$

All together this gives the asymptotic formula in Theorem~\ref{main}, part~$1$.  

\section{Rational points on $t_0^2t_2+t_0t_1t_3+t_1^2t_4=0$}

In this section we prove Theorem~\ref{main}, part~$2$. The argument and notation follow \cite{Browning} closely. Let $W\subset {\mathbb P}^4_{\mathbb Q}$ be given by the equation $t_0^2t_2+t_1^2t_3+t_0t_1t_4=0$. Let $V=W\setminus \{ t_0=t_1=0 \}$ and
$$
V_y = \{ (t_0:t_1:t_2:t_3:t_4)\in {\mathbb P}^4 \; \mid \; \lambda t_0 - \mu t_1 = {\mu}^2 t_2+{\lambda}^2 t_3 +\mu\lambda t_4=0  \} \subset W,
$$
where $y=(\mu : \lambda ) \in {\mathbb P}^1$.\\

Then $V=\coprod\limits_{y\in {\mathbb P}^1}V\cap V_y$, and so
$$
N(V,B)=\sum_{y\in {\mathbb P}^1(\mathbb Q)}N(V\cap V_y,B).
$$

\textit{Claim}. (cf. \cite{Browning}, Lemma~$3.1$) Assume $(\mu, \lambda)\in {\mathbb Z}^2_{\operatorname{prim}}$, $y=(\mu :\lambda)\in {\mathbb P}^1(\mathbb Q)$. Then
$$
N(V\cap V_y,B)=\frac{1}{2}\cdot \# \{\; (\tau_0 , \tau_2 , \tau_3)\in {\mathbb Z}^3_{\operatorname{prim}} \;\mid \; \tau_0\neq 0 , \; H(\mu \tau_0,\lambda \tau_0, \lambda \tau_2, \mu \tau_3 , \lambda \tau_3 +\mu \tau_2)\leq B \}.
$$
\begin{proof} \cite{Browning}
Let $\mu\lambda \neq 0$ and take $(t_0,t_1,t_2,t_3,t_4)\in {\mathbb Z}^5_{\operatorname{prim}}\cap V_y$. Then $t_0=\mu\tau_0$, $t_1=\lambda\tau_0$, $t_2=\lambda \tau_2$, $t_3=\mu \tau_3$, $t_4=-\mu\tau_2-\lambda \tau_3$. \end{proof}

Let $\tilde{B}=B/\sqrt{{\lambda}^2+{\mu}^2}$, $c=\lambda\mu / ({\lambda}^2+{\mu}^2)$. Then in the calculation we may assume that 
$$
\lvert c\rvert \leq 1/2,\quad \tau_0^2+(1-\lvert c\rvert )\cdot (\tau_2^2+\tau_3^2)\leq \tilde{B}^2\quad \mbox{ and }\quad {\lambda}^2+{\mu}^2\leq B^2.
$$

If 
$$
N^{*}(V_y,B)=\# \{\; (\tau_0 , \tau_2 , \tau_3)\in {\mathbb Z}^3 \;\mid \; \tau_0\neq 0 ,\; H(\mu \tau_0,\lambda \tau_0, \lambda \tau_2, \mu \tau_3 , \lambda \tau_3 +\mu \tau_2 )\leq B \},
$$
then
$$
N(V\cap V_y,B)=\frac{1}{2}\sum_{k\ll \tilde{B}}\mu(k)\cdot N^{*}\left( V_y,\frac{B}{k}\right) .
$$

By Euler's summation formula,
\begin{multline*}
N^{*}(V_y,B) = \sum_{\substack{\lvert \tau_0\rvert \leq \tilde{B} \\ \tau_0\neq 0 }} \qquad \sum_{(\tau_2+c\tau_3)^2+\tau_3^2\cdot (1-c^2)\leq {\tilde{B}}^2-\tau_0^2} 1 \\
= \sum_{\substack{\lvert \tau_0\rvert \leq \tilde{B} \\ \tau_0\neq 0 }}\qquad \sum_{\lvert \tau_3 \rvert \leq \frac{\sqrt{{\tilde{B}}^2-\tau_0^2}}{\sqrt{1-c^2}}} \left( 2\cdot \sqrt{{\tilde{B}}^2-\tau_0^2-(1-c^2)\tau_3^2} + O(1) \right) \\
= \sum_{\substack{\lvert \tau_0\rvert \leq \tilde{B} \\ \tau_0\neq 0 }} \left( 2\cdot \int\limits_{-\frac{\sqrt{{\tilde{B}}^2-\tau_0^2}}{\sqrt{1-c^2}}}^{\frac{\sqrt{{\tilde{B}}^2-\tau_0^2}}{\sqrt{1-c^2}}} \sqrt{{\tilde{B}}^2-\tau_0^2-(1-c^2)x^2}\cdot \operatorname{d}x +O(\tilde{B}) \right) \\
= \frac{4\pi B^3}{3\cdot \sqrt{1-c^2}\cdot ({\lambda}^2+{\mu}^2)^{3/2}} + O\left( \frac{B^2}{{\lambda}^2+{\mu}^2} \right) .
\end{multline*}

Hence 
$$
N(V\cap V_y,B)=\frac{2\pi B^3}{3\zeta(3)\cdot \sqrt{1-c^2} ({\lambda}^2+{\mu}^2)^{3/2}} + O\left( \frac{B^2}{{\lambda}^2+{\mu}^2} \right) .
$$

After summing over $y\in {\mathbb P}^1(\mathbb Q)$, this gives the result.  

\section{Tamagawa numbers}

In this section we express, following \cite{Browning}, the leading term of the asymptotic formula in Theorem~\ref{main}, part~$2$, via Tamagawa numbers \cite{Peyre}, \cite{Batyrev}.\\

Let $W\subset {\mathbb P}_{\mathbb Q}^4$ be defined by the equation $t_0^2t_2+t_0t_1t_3+t_1^2t_4=0$, $V=W\setminus \{ t_0=t_1=0 \}$ and $\nu\colon X\to W$ be the normalization. Explicitly, we take $X=\{\; \operatorname{rank}\begin{pmatrix} 
t_0 & t_5 & -t_4\\
t_1 & t_2 & (t_3+t_5)
\end{pmatrix} \leq 1 \} \subset {\mathbb P}^5$ and $P=(0:0:0:0:0:1)$. Then $\nu$ is the projection from $P$ into ${\mathbb P}^4=\{ t_5=0 \}$.\\

We use terminology and notation from \cite{Batyrev}. Let $\mathcal L =\mathcal O (1)$ be the chosen ample invertible sheaf on $V$ metrized as in \cite{Browning}. The following Lemma is proven exactly as in \cite{Browning}.

\begin{lemma} (cf. \cite{Browning})
$X$ is the $\mathcal L$-closure of $V$. $V$ is weakly $\mathcal L$-saturated, not $\mathcal L$-primitive and contains no strongly $\mathcal L$-saturated Zariski open dense subvariety. The fibration $V\to {\mathbb P}^1$, which was used to count rational points on $V$, extends to an $\mathcal L$-primitive fibration $X={\mathbb P^1}\times {\mathbb P^2}\to {\mathbb P}^1$, which is the projection onto the first factor. In particular, $V\cap V_y$ is $\mathcal L$-primitive and ${\alpha}_{\mathcal L}(V\cap V_y )={\alpha}_{\mathcal L}(V)=3$. Moreover,
\begin{gather*}
{\beta}_{\mathcal L}(V\cap V_y ) = \operatorname{rank} \operatorname{Pic} (\mathbb P^2)=1,\\
{\gamma}_{\mathcal L}(V\cap V_y ) = \int_{0}^{\infty}\operatorname{e}^{-3y}\cdot \operatorname{d}y=\frac{1}{3},\\
{\delta}_{\mathcal L}(V\cap V_y ) = \# H^1(\operatorname{Gal}(\overline{\mathbb Q}/{\mathbb Q}),\operatorname{Pic}(\mathbb P^2))=1.
\end{gather*}
The Tamagawa number ${\tau}_{\mathcal L}(V\cap V_y )$, defined in \cite{Batyrev}, coincides with the Tamagawa number ${\tau}_H(\mathbb P^2 )$ defined in \cite{Peyre} with respect to the adelic metric on ${\omega}_{\mathbb P^2}^{-1}$ chosen as in \cite{Browning}. The projection $\mathbb P^2 \to \mathbb P^4$ corresponding to the point $y=(\mu : \lambda)\in \mathbb P^1$ is given by $({\tau}_0 : {\tau}_1 : {\tau}_2)\mapsto (\mu {\tau}_0 : \lambda {\tau}_0 : \lambda {\tau}_1 : (-\mu {\tau}_1 - \lambda {\tau}_2) : \mu {\tau}_2)$. 
Following \cite{Peyre}, Lemma~$2.1.2$, one computes
\begin{gather*}
{\omega}_p({\mathbb P}^2({\mathbb Q}_p))=\frac{\# {\mathbb P}^2({\mathbb F}_p)}{p^2}=\frac{p^2+p+1}{p^2}\quad \mbox{for any prime}\;\; p,\\
{\omega}_{\infty}({\mathbb P}^2({\mathbb R}))=\int_{{\mathbb R}^2}\frac{\operatorname{d}x\operatorname{d}y}{(({\lambda}^2 + {\mu}^2)(1+x^2+y^2)+2\mu\lambda \cdot xy)^{3/2}}=\frac{2\pi}{\sqrt{({\lambda}^2+{\mu}^2)({\lambda}^4 + {\lambda}^2{\mu}^2 + {\mu}^4)}}.
\end{gather*}
Hence 
$$
{\tau}_{\mathcal L}(V\cap V_y ) = \frac{2\pi}{\zeta(3)\cdot \sqrt{({\lambda}^2 + {\mu}^2)({\lambda}^4 + {\lambda}^2{\mu}^2 + {\mu}^4)}}.
$$
\end{lemma}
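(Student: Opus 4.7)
The plan is to verify each assertion by transferring data from the normalization $\nu\colon X = \mathbb{P}^1 \times \mathbb{P}^2 \to W$ to $V$ and its fibers $V \cap V_y$, exploiting the product structure and following \cite{Browning} step by step.

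First I would check that $X$ is the $\mathcal{L}$-closure of $V$ in Batyrev's sense: $\nu$ is finite and birational, $\nu^{-1}(V) \subset X$ is open dense and maps isomorphically to $V$, $X$ is smooth and projective, and $\nu^*\mathcal{L}$ inherits the adelic metric from $\mathcal{L}$. The fact that $V$ is weakly but not strongly $\mathcal{L}$-saturated, and contains no strongly $\mathcal{L}$-saturated Zariski open dense subvariety, then transfers exactly as in \cite{Browning}: each fiber $V \cap V_y$ accumulates $\sim B^3$ rational points, which matches the global growth of $N(V,B)$, so removing any proper closed subset still leaves infinitely many such accumulating subvarieties. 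The fibration $V \to \mathbb{P}^1$ extends to the first projection $X = \mathbb{P}^1 \times \mathbb{P}^2 \to \mathbb{P}^1$, which is $\mathcal{L}$-primitive since each fiber is $\mathbb{P}^2$ with $\mathcal{L}|_{\mathbb{P}^2} = \mathcal{O}(1)$ and $-K_{\mathbb{P}^2} = 3\cdot \mathcal{L}|_{\mathbb{P}^2}$.

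Next I would compute the Batyrev--Tschinkel invariants of the generic fiber $V \cap V_y \cong \mathbb{P}^2$: $\alpha_\mathcal{L} = 3$ because $-K_{\mathbb{P}^2} = 3H$ and $\mathcal{L}$ restricts to $H$; $\beta_\mathcal{L} = \operatorname{rank}\operatorname{Pic}(\mathbb{P}^2) = 1$; $\gamma_\mathcal{L} = \int_0^\infty e^{-3y}\,dy = 1/3$; and $\delta_\mathcal{L} = 1$ since the Galois action on $\operatorname{Pic}(\mathbb{P}^2) = \mathbb{Z}$ is trivial. Primitivity of the individual fibers uses the same argument as in \cite{Browning}, and the equality $\alpha_\mathcal{L}(V) = 3$ matches the exponent observed in the main term of part~2 of Theorem~\ref{main}.

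The main content is the Tamagawa number itself. At each finite prime, Peyre's Lemma~2.1.2 gives $\omega_p(\mathbb{P}^2(\mathbb{Q}_p)) = \#\mathbb{P}^2(\mathbb{F}_p)/p^2 = (p^2+p+1)/p^2$. Multiplying by the convergence factors $(1-1/p)$ (one per generator of the rank-one Picard group) yields Euler factors $(1-1/p^3)$, so the infinite product collapses to $1/\zeta(3)$. The archimedean measure is obtained by pulling back the adelic metric on $\mathcal{O}(1)$ along the embedding $(\tau_0:\tau_1:\tau_2)\mapsto (\mu\tau_0:\lambda\tau_0:\lambda\tau_1:-\mu\tau_1-\lambda\tau_2:\mu\tau_2)$: on the chart $\tau_0=1$ with $x = \tau_1,\; y = \tau_2$, the $\ell^2$-norm squared of the image is $(\lambda^2+\mu^2)(1+x^2+y^2) + 2\mu\lambda\, xy$, producing the displayed integral. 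That quadratic form in $(x,y)$ has symmetric matrix with diagonal $\lambda^2+\mu^2$ and off-diagonal $\mu\lambda$, hence determinant $\lambda^4 + \lambda^2\mu^2 + \mu^4$; after orthogonal diagonalization and the standard identity $\int_{\mathbb{R}^2}(1+u^2+v^2)^{-3/2}\,du\,dv = 2\pi$, one obtains $\omega_\infty = 2\pi/\sqrt{(\lambda^2+\mu^2)(\lambda^4+\lambda^2\mu^2+\mu^4)}$. The main obstacle is matching \cite{Browning}'s adelic-metric conventions exactly, so that the convergence factors at finite primes and the normalization at infinity assemble into the stated formula without stray factors of $2$, $\pi$, or powers of $(\lambda^2+\mu^2)$.
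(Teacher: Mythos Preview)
Your proposal is correct and follows exactly the route the paper indicates: the paper's own proof is nothing more than the sentence ``proven exactly as in \cite{Browning}'', and your outline faithfully unpacks that reference---identifying $X=\mathbb{P}^1\times\mathbb{P}^2$ as the $\mathcal{L}$-closure, reading off $\alpha,\beta,\gamma,\delta$ from the fiber $\mathbb{P}^2$, and computing $\tau_{\mathcal{L}}$ via Peyre's local densities with the convergence factors collapsing the finite product to $1/\zeta(3)$. Your archimedean computation (diagonalizing the quadratic form with determinant $\lambda^4+\lambda^2\mu^2+\mu^4$ and reducing to $\int_{\mathbb{R}^2}(c+r^2)^{-3/2}=2\pi/\sqrt{c}$) is in fact more explicit than anything the paper writes out.
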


Thus, the leading coefficient of the asymptotic formula in Theorem~\ref{main}, part~$2$, confirms the prediction of Batyrev and Tschinkel \cite{Batyrev} in this case, up to a factor of $1/3$ coming from ${\gamma}_{\mathcal L}(V\cap V_y )$.

\begin{remark}
The discrepancy with the conjectural form of the leading term in \cite{Batyrev} will be resolved if one redefines the constant ${\gamma}_{\mathcal L}(V )$ in general as follows:
$$
{\gamma}_{\mathcal L}(V ):= {\mathcal X}_{{\Lambda}_{\operatorname{eff}}(V,\mathcal L)}(\tilde{\rho}([{\rho}^{*}L])).
$$
See \cite{Batyrev}, Definition~$2.3.16$. Such a modification is justified by the observation that the constant $c_{{\mathcal L}^k}(V)$, if defined as in \cite{Batyrev}, section~$3.4$, grows linearly with $k$. After this modification, $c_{{\mathcal L}^k}(V)$ grows as $k^{1-{\beta}_{\mathcal L}(V)}$, as needed for the compatibility of the conjecture in \cite{Batyrev} with the equality $N(V,{\mathcal L}^k,B)=N(V,\mathcal L,B^{1/k})$.
\end{remark}

\section*{Acknowledgement}

The author is grateful to Beijing International Center for Mathematical Research, the Simons Foundation and Peking University for support, excellent working conditions and encouraging atmosphere.\\

\bibliographystyle{ams-plain}

\bibliography{NonnormalCubicsRatPts}

\providecommand{\bysame}{\leavevmode\hbox to3em{\hrulefill}\thinspace}
\begin{thebibliography}{1}

\bibitem{Batyrev}
V.~Batyrev and Y.~Tschinkel, \emph{Tamagawa numbers of polarized algebraic
  varieties}, Ast{\'{e}}risque \textbf{251} (1998), 299--340.

\bibitem{Browning}
R.~de~la Bret{\`{e}}che, T.~Browning, and P.~Salberger, \emph{Counting rational
  points on the {C}ayley ruled cubic}, European {J}ournal of {M}athematics
  \textbf{2} (2016), no.~1, 55--72.

\bibitem{LPS}
W.~Lee, E.~Park, and P.~Schenzel, \emph{On the classification of non-normal
  cubic hypersurfaces}, Journal of {P}ure and {A}pplied {A}lgebra \textbf{215}
  (2011), no.~8, 2034--2042.

\bibitem{Peyre}
E.~Peyre, \emph{Hauteurs et mesures de {T}amagawa sur les vari{\'{e}}t{\'{e}}s
  de {F}ano}, Duke {M}athematical {J}ournal \textbf{79} (1995), no.~1,
  101--218.

\end{thebibliography}

\end{document}